\documentclass[a4paper,12pt]{article}
\usepackage[warn]{mathtext}
\usepackage{cmap}
\usepackage[T2A]{fontenc}
\usepackage[cp1251]{inputenc}
\usepackage[russian]{babel}
\usepackage{amsmath}
\usepackage{wasysym}
\usepackage{amssymb,amsthm}
\usepackage[dvips]{graphicx}

\topmargin -19mm \oddsidemargin 0.5cm \textwidth 16cm \textheight
24.3cm

\theoremstyle{plain}
\newtheorem{theorem}{Теорема}[section]
\newtheorem{corollary}[theorem]{Следствие}
\newtheorem{lemma}[theorem]{Лемма}
\newtheorem{proposition}[theorem]{Предложение}

\newtheorem{remark}[theorem]{Замечание}

\DeclareMathOperator{\Evol}{Evol}
\DeclareMathOperator{\cut}{cut}

\theoremstyle{definition}
\newtheorem{definition}[theorem]{Определение}

\hoffset=-1.5cm

\hfuzz=1pt
\relpenalty=1000
\begin{document}
\title{Равномерная рекуррентность HD0L систем.}

\author{И.~Митрофанов}

\maketitle

\begin{center}
\bigskip
Московский Государственный Университет им. М.~В.~Ломоносова.
\end{center}
\begin{abstract}
We prove that the problem of deciding whether a given morphic sequence is uniformly recurrent is decidable.
The proof uses decidability of HD0L periodicity problem, which was recently proved in papers of F.Durand and I.Mitrofanov.
\end{abstract}

\section{Введение.}
{\it Алфавит} -- это конечное множество элементов, называемых {\it буквами}.

Конечная последовательность букв алфавита $A$ называется конечным словом (или просто словом) над $A$. Множество конечных слов, включая пустое, обозначается $A^*$. Пустое слово обозначают $\varepsilon $.
{\it Бесконечное слово}, или {\it сверхслово} -- это отображение из $\mathbb N$ в алфавит $A$. Множество сверхслов обозначается $A^{\omega}$.

Для конечного слова определена {\it длина} -- количество букв в нём. Длина слова $u$ будет обозначаться $|u|$.
Если слово $u_1$ конечно, то определена {\it конкатенация} слов $u_1$ и $u_2$ -- слово $u_1u_2$, получающееся приписыванием второго к первому справа.

Слово $v$ является {\it подсловом} слова $u$, если $u=v_1vv_2$ для некоторых слов $v_1$, $v_2$. В случае, когда $v_1$ или $v_2$ -- пустое слово, $v$ называется {\it началом} или соответственно {\it концом} слова $u$. На словах существует естественная структура частично упорядоченного множества: $u_1\sqsubseteq u_2$, если $u_1$ является подсловом $u_2$. Будем обозначать $u_1\sqsubseteq_k u_2$, если слово $u_1$ входит в $u_2$ не менее $k$ раз.

Сверхслово $W$ называется {\it рекуррентным}, если любое его подслово встречается в $W$ бесконечно много раз, иначе говоря, $v\sqsubseteq W\Rightarrow v\sqsubseteq _{\infty}W$.

Сверхслово $W$ называется {\it равномерно рекуррентным}, если для любого его подслова $u$ конечной длины существует такое число $N(u)$, 
что для любого подслова сверхслова $W$ длины $N(u)$ в этом подслове есть хотя бы одно вхождение $u$ (следовательно, $W$ является рекуррентным).

Сверхслово $W$ называется {\it периодичным}, если $W=uuuuu\dots$ для некоторого непустого $u$. Само слово $u$, равно как и его длина, называются
{\it периодом} сверхслова $W$. 
Сверхслово называется {\it заключительно периодичным}, если оно представляется в виде конкатенации $W=uW'$, где $u$ -- конечное слово, а $W'$ --
периодичное сверхслово.
\begin{proposition} Если сверхслово заключительно периодично и рекуррентно, то оно периодично.
\end{proposition}

Множество слов $A^{*}$ над алфавитом $A$ можно считать свободным моноидом с операцией конкатенацией и единицей -- пустым словом. Отображение $\varphi\colon A^{*}\to B^{*}$ называется {\it морфизмом}, если оно сохраняет операцию моноида. Очевидно, морфизм достаточно задать на буквах алфавита $A$. Морфизм называется {нестирающим}, если образом никакой буквы не является пустое слово. Если $|\varphi(a_i)|=1$ для любой буквы $a_i\in A$,
то $\varphi$ -- {\it кодирование}. Если алфавиты $A$ и $B$ совпадают, то $\varphi$ называется {\it подстановкой}.

Морфизм можно продолжить на бесконечное слово по правилу: если $u$ -- начало $W$, то $\varphi(u)$ -- начало $\varphi(W)$.

Если $\varphi$ -- такая подстановка, что $\varphi(a_1)=a_1v$ для некоторого слова $v$ и для всех $k\in \mathbb N$ $\varphi^k(v)\not=\varepsilon$, то говорится, что $\varphi$ {\it продолжается над} $a_1$ и бесконечное слово 
$$
\varphi^{\infty}(a_1):=a_1v\varphi(v)\varphi^2(v)\varphi^3(v)\varphi^4(v)\dots
$$
называется {\it чисто морфическим}, или {\it бесконечной неподвижной точкой} морфизма $\varphi$.
Если задан морфизм $\psi\colon A^{*}\to C^{*}$, то сверхслово $\psi(\varphi^{\infty}(a_1))$ называется {\it морфическим}.

Сформулируем {\it проблему равномерной рекуррентности для морфических слов}, поставленную в \cite{Pr} и \cite{PS}:

{\bf Дано:} два конечных алфавита $A$ и $B$, буква $a_1\in A$, подстановка $\varphi:A^*\to A^*$, 
продолжающаяся над $a_1$ и морфизм $\psi:A^*\to B^*$.

{\bf Определить:} является ли слово $\psi(\varphi^{\infty}(a_1))$ равномерно рекуррентным?

В настоящей работе строится алгоритм для произвольного морфического слова:
\begin{theorem} \label{main}
Проблема равномерной рекуррентности для морфических слов разрешима.
\end{theorem}

Этот результат был анонсирован в \cite{MKR}. До этого проблема была решена в случае автоматных последовательностей \cite{Ch}, в работе \cite{Pr}
строится полиномиальный алгоритм для классов чисто морфических и автоматных слов.

\section{Приведение морфизмов к удобному виду.}
Сформулируем {\it проблему заключительной периодичности для морфических слов.}

{\bf Дано:} два конечных алфавита $A$ и $B$, буква $a_1\in A$, подстановка $\varphi:A^*\to A^*$, 
продолжающаяся над $a_1$ и морфизм $\psi:A^*\to B^*$.

{\bf Определить:} является ли слово $\psi(\varphi^{\infty}(a_1))$ заключительно периодическим. Если является, явно указать его период.

\begin{theorem}\label{MP}
Проблема заключительной периодичности для морфических слов разрешима.
\end{theorem}

См. работы \cite{MP,Dur}.

\begin{theorem}[доказательство см., например, в \cite{MP}] \label{monad}
 Пусть $\varphi$ -- подстановка, действующая на алфавите $A$, $\psi$ -- морфизм из $A^*$ в $B^*$, $u$ -- конечное слово из $B^*$.
Тогда существует алгоритм, проверяющий, встречается ли $u$ в слове $W=\psi(\varphi^{\infty}(a_1))$ и, если встречается, конечное ли число раз.
\end{theorem}

\begin{proposition}\label{localP}
Если равномерно рекуррентное сверхслово $W$ для некоторого непустого $U$ и любого натурального $k$ содержит подслово $U^k$, то оно является число периодическим
с периодом $u$.
\end{proposition}
\begin{proof} Это следует из того, что если $|V|\sqsubseteq W$ и $|V|=|U|$, то $V$ является циклическим сдвигом слова $U$. 
\end{proof}

Мы хотим проверить на равномерную рекуррентность сверхслово $W=\psi(\varphi^{\infty}(a_1))$. Морфизм $\varphi$ можно считать нестирающим, а $h$ -- кодированием (то есть для любой буквы $a\in A$ $|\varphi(a)|>0$, $|\psi(a)|=1$).  Это следует из результата
\begin{theorem}[см. \cite{AS}, глава $7$] Если $f: A^*\to B^*$ и $g:A^*\to A^*$ -- произвольные морфизмы и $f(g^{\infty}(a_1))$ --
бесконечное слово, то можно найти такие алфавит $A'$, букву $a'_1\in A'$, нестирающую подстановку $\varphi$, действующую на алфавите $A$
и кодирование $\tau:A'\to B$, что $f(g^\infty(a_1))=\tau(\varphi^{\infty}(a'_1))$.
\end{theorem}

Слово $w\in A^*$ будем называть $\varphi-${\it ограниченным}, если последовательность
$$
w,\varphi(w),\varphi^2(w),\varphi^3(w),\dots
$$
 периодична начиная с некоторого момента.
В противном случае, $|\varphi^n(w)|\rightarrow \infty$ при $n\rightarrow \infty$ и слово $w$ называется {\it $\varphi-$растущим}.
Очевидно, слово является $\varphi-$ограниченным тогда и только тогда, когда оно состоит из $\varphi-$ограниченных букв.

\begin{theorem} \label{finwords}
Существует алгоритм, который определяет, конечно ли в $\varphi^{\infty}(a)$ число различных $\varphi-$ограниченных подслов.
Если это число бесконечно, то можно указать такое непустое $U$, что $U^k$ является подсловом $\varphi^{\infty}(a)$ для любого $k\in \mathbb N$.
Если оно конечно, то все $\varphi-$ограниченные слова алгоритмически находятся.
\end{theorem}

\begin{proof}
Прежде всего отметим, что все $\varphi-$возрастающие буквы алгоритмически находятся (см., например, \cite{Pr}). Далее $\varphi-$растущие буквы будем писать как $a_1$, $a_2$ и т.д.

Построим ориентированный граф $Q$, на рёбрах которого будут записаны упорядоченные пары слов.
Вершинами этого графа будут служить $\varphi-$растущие буквы из $A$ а также всевозможные упорядоченные пары $\varphi -$растущих букв. Введём фиктивную букву $t$, также к вершинам $Q$ добавим всевозможные пары вида $a_it$, где $a_i$ -- $\varphi-$растущая буква.

Из вершины $a_i$ в $a_j$ идёт ребро, если $a_j\in a_i$. На таких рёбрах пара слов -- $\{\varepsilon ,\varepsilon \}$ ($\varepsilon $-- пустое слово).
Из вершины $a_i$ в $a_ja_k$
ведёт ребро со словами $\{\omega,\varepsilon\}$, если для некоторого
$\varphi-$ограниченного слова $\omega $ слово $a_j\omega a_k$ является подсловом $\varphi(a_i)$ (из $a_i$ в $a_ja_k$ могут вести несколько рёбер.)
Из $a_i$ и $a_it$ ведёт по ребру с парой $\{\omega,\varepsilon\}$ в $a_jt$, если $\omega$ -- $\varphi-$ограниченное и $\varphi(a_i)$ оканчивается на $a_j\omega$.

Из $a_ia_j$ ведёт ребро в $a_ka_l$ с парой $\varphi-$ограниченныx слов $\{\omega_1;\omega_2\}$, если $\varphi(a_1)$ кончается на $a_k\omega_1$, а
$\varphi(a_2)$ начинается на $\omega_2a_l$.
\begin{proposition}
Пусть $k\in\mathbb N$. Рассмотрим какой-нибудь путь длины $k$ по рёбрам графа $Q$, выходящий из $a_1$.
Последовательность пар слов на рёбрах этого пути
$$
\{u_1,v_1\},\{u_2,v_2\},\dots,\{u_k,v_k\}. 
$$
Тогда в $\varphi^k(a_1)$ есть $\varphi-$нерасширяемое слово
$$
u_k\varphi (u_{k-1})\dots\varphi ^{k-1}(u_1)\varphi^{k-1}(v_1)\varphi^{k-2}(v_2)\dots v_k.
$$
При этом, если путь оканчивается на $a_ia_j$, то есть вхождение этого слова, обрамлённое буквами $a_i$ и $a_j$.

Наоборот, любое $\varphi-$ограниченное подслово $\varphi^k(a_1)$ можнно получить, получив слово по указанному правилу слово и взяв его подслово.
\end{proposition}

Это несложно показывается индукцией по $k$.

Первый случай: в любом ориентированном цикле графа $Q$, до которого можно добраться из $a_1$, на рёбрах цикла написаны пары пустых слов. Тогда в любом пути, выходящем из вершины $a_1$, число рёбер, на которых написаны не пустые слова, не превосходит количества вершин в $Q$. Следовательно, число различных $\varphi-$ограниченных слов конечно, а по графу $Q$ можно их всех найти.

Второй случай: есть цикл, и в этом цикле не все слова пустые. Пусть, например, в цикле есть пары слов, в которых первое слово не пустое. Тогда для 
некоторой буквы $a_i$ и непустого $\varphi-$ограниченного $u$, слово $\varphi(a_i)$ оканчивается на $a_iu$.
Образ $u$ при подстановке $\varphi$ зациклится. Следовательно, для некоторого непустого слова $U$ для любого $k\in\mathbb N$ слово $U^k$ является
подсловом $\varphi^{\infty}(a)$.
\end{proof}

Применим этот алгоритм к $\varphi$ и $a_1$. Если некоторое слово $U$ повторяется в $\varphi(a_1)$ сколь угодно много раз подряд,
то $\psi(U)$ повторяется сколь угодно много раз подряд в $W$. Согласно \ref{localP}, для установления равномерной рекуррентности $W$ достаточно проверить, является ли оно чисто периодическим с периодом $\psi(U)$.
\begin{proposition}
Бесконечное слово является чисто периодическим с заданным периодом $A$ тогда и только когда, когда все его конечные подслова длины $|A|$ являются циклическими сдвигами $A$.
\end{proposition}
Согласло \ref{monad}, все конечные подслова $W$ длины $|\psi(U)|=|U|$ можно найти, стало быть, в этом случае определить, является ли 
$W$ равномерно рекуррентным, мы можем.

В дальнейшем рассматриваем случай, когда в $\varphi^{\infty}(a_1)$ конечное число $\varphi-$ ограниченных подслов.

Пусть $I_{\varphi}$ -- множество всех $\varphi-$растущих букв, $B_{\varphi}$ -- множество $\varphi-$ограниченных подслов сверхслова
$\varphi^{\infty}(a_1)$(включая пустое слово).
Можно считать, что $B_{\varphi}$ конечно и что мы знаем все слова в $B_{\varphi}$.
Рассмотрим (конечный) алфавит $C$, состоящий из символов $[twt']$, где $t$ и $t'$ буквы из $I_{\varphi}$, а 
$w$ -- слово из $B_{\varphi}$ и слово $twt'$ является подсловом $\varphi^{\infty}(a)$.

Определим морфизм $\varphi':C^* \to C^*$ следующим образом:
$$
\varphi'([twt'])=[t_1wt_2][t_2wt_3]\dots[t_kw_kt_{k+1}],
$$
где $\varphi(tw)=w_0t_1w_1t_2\dots t_kw'_k$, слово $\varphi(t')$ начинается с $w''_kt_{k+1}$ и $w_k=w'_kw''_k$ (cлова $w_i$, $w'_k$ и $w''_k$ принадлежат $B_{\varphi} $).

Также определим $f:C^*\to A^*$ по правилу
$$
f([twt'])=tw.
$$

\begin{proposition} \label{mor_hor}
 Все буквы алфавита $C$ являются $\varphi'-$растущими.
\end{proposition}

\begin{proof}
Заметим, что в $\varphi'^n([twt'])$ столько же букв, сколько в слове $\varphi^n(t)$ $\varphi-$растущих букв.
Очевидно, в образе $\varphi(t)$ от произвольной буквы $t\in I_{\varphi}$ содержится хотя бы одна буква из $I_{\varphi}$. Более того, в слове $\varphi ^n(t)$ для некоторого $n$ содержатся хотя бы две буквы из $I_{\varphi}$, иначе $\varphi ^n(t)=w_nt_{i_n}v_n$
(где $w_n$ и $v_n$ принадлежат $B_{\varphi }$) и $|\varphi^n{t}|$ ограниченно.
\end{proof}

Пусть $\varphi^{\infty}(a)$ имеет вид $a_1w_1a_2\dots$, где $a_1,a_2\in I_{\varphi}$, $w_1\in B_{\varphi}$.
Тогда, несложно убедиться, что для любого $n$ слово $\varphi ^n(a)$ является началом слова $f(\varphi'^n([a_1w_1a_2]))$,
следовательно, слова $W$ и $(\psi\circ f)(\varphi'^{\infty}([a_1w_1a_2]))$ совпадают. Заметим, что морфизм $h'=h\circ f$ является нестирающим.

\begin{remark} Конструкция морфизма $\psi$ встречалась в работах \cite{Pr,Pans}.
\end{remark}

{\bf Таким образом, можно считать, что все буквы алфавита $A$ являются $\varphi-$ растущими, а $h$ - произвольный нестирающий морфизм.}

Букву $a_i\in A$ назовём {\it рекуррентной}, если для некоторого $k\in \mathbb N$ выполнено $a_i\sqsubseteq \varphi^k(a_i)$.
Для каждой рекуррентной буквы алфавита существует такое число $k(a_i)$, что если $k|n$, то $a_i\sqsubseteq \varphi^n(a_i)$.	Следовательно, существует
такое $n\in \mathbb N$, что если $a_i$ -- произвольная рекуррентная буква алфавита, то $a_i\sqsubseteq \varphi^n(a_i)$.
Положим $\rho=\varphi^n$.

Рассмотрим ориентированный граф $G_{\rho}$, вершинами которого являются буквы алфавита $A$, и из $a_i$ ведёт стрелка в $a_j$ тогда и только тогда,
когда $a_j$ содержится в $\rho(a_i)$. 

Пусть $D$ -- сильносвязная компонента этого графа, до которой можно дойти по стрелочкам из $a_1$. Рассмотрим ограничение $\rho $ на $D^*$.
Все буквы из $D$ являются рекуррентными. Следовательно, если $d\in D$, то $d\sqsubseteq \rho(d)$. 
Также для любого $k$ выполнено $\rho^k(d)\sqsubseteq \rho^{k+1}(d)$. 
Следовательно, существует такое $m$, что для любых букв $d_1$ и $d_2\in D$ $d_2\sqsubseteq \rho^m(d_1)$.
Поэтому морфизм $\rho$ в ограничении на $D^*$ является примитивным.

Найдётся такая буква $d\in D$, что $\rho^l(d)$ начинается на $d$ для некоторого $l$. Обозначим $\rho_2=\rho^l$.
Так как все буквы из $D$ являются $\rho_2-$растущими, то $\rho_2^{\infty}(d)$ является бесконечным сверхсловом, все конечные подслова которого являются подсловами $\varphi ^{\infty}(a_1)$.

Слово $H=\psi(\rho_2^{\infty}(d))$ является равномерно рекуррентным как примитивное (см., например, в \cite{AS}.)

\begin{proposition}\label{Pr}
Пусть $W$ и $H$ -- сверхслова, $H$ является равномерно рекуррентным, и все конечные подслова $H$ являются подсловами сверхслова $W$.
Тогда $W$ является равномерно рекуррентным тогда и только тогда, когда любое его конечное подслово является подсловом сверхслова $H$.
\end{proposition}
Доказательство можно найти в \cite{Pr}.

Проверим, является ли $H$ периодичным. Если является, то достаточно проверить периодичность слова $W$, что мы делать умеем.
Поэтому далее считаем, что $H$ -- непериодичное слово.
\section{Порядок роста букв.}

Напомним: $\varphi $ -- продолжающаяся над $a_1$ подстановка на алфавите $A$, для которой все буквы являются $\varphi -$возрастающими,
$\psi:A\to B^+$ -- нестирающий морфизм. Можно считать, что все буквы из $A$ принадлежат $\varphi ^{\infty}(a_1)$.

Известно следующее утверждение (см. \cite{SS}):
\begin{proposition} Пусть $\sigma:A^*\to A^*$ -- подстановка. Для любой буквы $a\in A$ выполнено одно из двух условий:
\begin{enumerate}
\item $\exists k\in \mathbb N: \sigma^k(a)=\varepsilon .$
\item $\exists d(a)\in \mathbb N_0, c(a)\in \mathbb R_+, C(a)\in \mathbb R, \theta(a) \in \mathbb R$ такие, что для всех $k$ выполнено
$$c(a)<\frac{\sigma^n(a)}{c(a)n^{d(a)}\theta(a)^n}<C(a)$$
\end{enumerate}
\end{proposition}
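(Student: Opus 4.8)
The plan is to recast the statement as one about powers of a non-negative integer matrix and then invoke the classical Perron--Frobenius structure theory for such matrices; the whole content is the growth classification of Salomaa and Soittola \cite{SS}, so the work is mostly in the setup. Introduce the \emph{growth matrix} $M=(M_{bc})_{b,c\in A}$ of $\sigma$, where $M_{bc}$ is the number of occurrences of $b$ in $\sigma(c)$. A straightforward induction gives that $(M^n)_{bc}$ is the number of occurrences of $b$ in $\sigma^n(c)$, so that
$$
|\sigma^n(a)|=\sum_{b\in A}(M^n)_{ba}=\mathbf 1^{\top}M^n e_a,
$$
with $e_a$ the $a$-th standard basis vector and $\mathbf 1$ the all-ones vector; equivalently, $(M^n)_{ba}$ counts the walks of length $n$ from $a$ to $b$ in the digraph $\Gamma$ on vertex set $A$ having $M_{bc}$ parallel edges from $c$ to $b$. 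Since $\sigma^k(a)=\varepsilon$ is equivalent to $M^ke_a=0$ and forces $\sigma^{k+1}(a)=\varepsilon$, alternative~(1) holds exactly when the subgraph of $\Gamma$ induced by the vertices reachable from $a$ is acyclic. So assume it is not acyclic; then $|\sigma^n(a)|\ge 1$ for all $n$, and it remains to produce $d(a),\theta(a),c(a),C(a)$.

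Next I would put $M$ into Frobenius normal form: after permuting $A$, $M$ is block upper-triangular with diagonal blocks $M_1,\dots,M_s$ indexed by the strongly connected components $A_1,\dots,A_s$ of $\Gamma$, each block irreducible or $1\times1$. Let $\rho_i\ge0$ be the spectral radius of $M_i$; one has $\rho_i\ge1$ whenever $A_i$ carries a cycle, since $M_i$ has non-negative integer entries. Perron--Frobenius theory gives, for each block with $\rho_i>0$ and period $p_i$, that $(M_i^n)_{jk}=O(\rho_i^n)$, with $(M_i^n)_{jk}=\Omega(\rho_i^n)$ for $j,k$ in a suitable residue class of $n$ mod $p_i$. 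An entry $(M^n)_{ba}$ is then a finite sum, over chains of pairwise distinct components $A_{i_0}\to\dots\to A_{i_r}$ with $a\in A_{i_0}$, $b\in A_{i_r}$ and consecutive components joined by an edge of $\Gamma$, of convolutions of the block contributions; a chain having exactly $t$ components of maximal spectral radius $\theta_0=\max_j\rho_{i_j}$ contributes $\Theta\!\big(n^{t-1}\theta_0^{\,n}\big)$ along suitable residue classes of $n$, and is $O\!\big(n^{t-1}\theta_0^{\,n}\big)$ in all cases. Summing over $b$, put $\theta(a)=\max\{\rho_i:A_i\ \text{reachable from}\ a\}$ and let $d(a)+1$ be the largest number of components with $\rho_i=\theta(a)$ occurring on a single such chain that starts at the component of $a$; then $|\sigma^n(a)|=\Theta\!\big(n^{d(a)}\theta(a)^n\big)$, with $\theta(a)\ge1$ and $d(a)\in\mathbb N_0$ by the standing assumption that $a$ reaches a cycle, which is alternative~(2).

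I expect the lower bound to be the only real obstacle. The upper bound $|\sigma^n(a)|=O\!\big(n^{d(a)}\theta(a)^n\big)$ is immediate from the Jordan form of $M$ once Perron--Frobenius on each block bounds the degree of the polynomial attached to an eigenvalue of modulus $\theta(a)$. To get $|\sigma^n(a)|\ge c(a)\,n^{d(a)}\theta(a)^n$ for \emph{every} $n$ one uses that walk counts never cancel, together with a residue argument: letting $P$ be the least common multiple of the periods $p_i$ of the blocks $M_i$ with $\rho_i=\theta(a)$ and $A_i$ reachable from $a$, for each residue $r\bmod P$ one exhibits a chain of extremal shape together with within-component walks staying in the appropriate residue classes, so that this configuration contributes $\Omega\!\big(n^{d(a)}\theta(a)^n\big)$ along $n\equiv r\pmod P$; then $c(a)$ is the minimum of the finitely many constants so obtained. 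This is also the reason one gets only $\Theta$ rather than a genuine asymptotic equivalence: when a maximal block has period $>1$, the ratio $|\sigma^n(a)|/\big(n^{d(a)}\theta(a)^n\big)$ truly oscillates, though between two positive bounds. The bookkeeping over component-chains and residue classes is routine; the statement itself is classical.
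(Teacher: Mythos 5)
The paper gives no proof of this proposition at all: it is quoted verbatim from Salomaa--Soittola (\cite{SS}), so there is nothing internal to compare against. Your argument is the standard proof of that classification --- incidence matrix $M$ with $|\sigma^n(a)|=\mathbf 1^{\top}M^ne_a$, acyclicity of the reachable subgraph characterising the mortal case, Frobenius normal form, and the Rothblum-type count of maximal-spectral-radius components along chains giving $d(a)$ --- and it is correct as a sketch; you also correctly identify the one genuinely delicate point, namely that the lower bound must hold for \emph{every} $n$ and not merely along a subsequence, and the residue-class-per-chain argument you outline is the right way to get it. Your approach is moreover consistent with the matrix-theoretic reasoning the author does carry out later (Jordan form of the $M_i$, positivity of the leading coefficients, comparison of growth rates) when making the constants $C_1$, $C_2$ and the pair $(d,\theta)$ algorithmically computable.
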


Очевидно, для $\varphi$ выполнено второе условие. Для каждой буквы $a_i$ пара $(d(a_i),\theta(a_i))$ называется {\it порядком роста буквы}. 
Также (см. \cite{Dur,SS}) известно, что если в $A$ есть буква порядка роста $(n,\theta)$, где $n>0$, 
то в $A$ есть буква с порядком роста $(n-1,\theta)$. Таким образом, если для некоторой буквы $a_i\in A$ $\theta(a_i)=1$, то в $A$ есть
буква роста $(0,1)$, то есть $\varphi-$ограниченная. Так как у нас все буквы $\varphi-$растущие, то для любой буквы $a_i$ $\theta(a_i)>1$.  

На порядках роста можно ввести операцию сравнения: $(d_1,\theta_1)<(d_2,\theta_2)$, если $\theta_1<\theta_2$ или $\theta_1<\theta_2$ и $d_1<d_2$.
Если у буквы $a_1$ порядок роста меньше, чем у $a_2$, то $\lim_{k\to\infty}\dfrac{|\varphi ^k(a_1)|}{|\varphi ^k(a_2)|}=0$.
Аналогично порядок роста буквы определяется {\it порядок роста конечного слова}, который для слова $u$ будем обозначать $r(u)$.
\begin{proposition}\label{qs}
Очевидно, $r(a_1a_2\dots a_n)=\max_{i\in[1;n]}r(a_i)$. Кроме того, $r(u)=r(\varphi(u))$.
\end{proposition}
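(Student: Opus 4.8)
The equality $r(a_1a_2\dots a_n)=\max_{i}r(a_i)$ is nothing but the definition of the rank of a word --- the largest growth type among its letters --- read off its presentation as a product of letters; if one instead takes $r(u)$ to be the growth type of the orbit $(|\varphi^n(u)|)_{n}$, it is the equally elementary remark that $|\varphi^n(a_1\cdots a_n)|=\sum_i|\varphi^n(a_i)|$ is comparable to the dominant of the finitely many summands. So the real content is the identity $r(u)=r(\varphi(u))$, which the plan is to prove by reducing it to single letters and then invoking a routine comparison of growth rates. Throughout, ``comparable'' means ``bounded above and below by positive constant multiples of'', and I use that, by the previous proposition, every letter $a$ (being $\varphi$-growing) has a growth type $(d(a),\theta(a))$ with $\theta(a)\ge1$ and $|\varphi^n(a)|$ comparable to $n^{d(a)}\theta(a)^n$; note that then $|\varphi^{n+1}(a)|$ is also comparable to $n^{d(a)}\theta(a)^n$, the index shift only altering the constants. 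The comparison fact I will use is: a sequence comparable to $n^{d_1}\theta_1^n$ is $O$ of one comparable to $n^{d_2}\theta_2^n$ exactly when $(d_1,\theta_1)\le(d_2,\theta_2)$, and their quotient tends to infinity exactly when $(d_1,\theta_1)>(d_2,\theta_2)$.

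For the reduction, if $u=a_{i_1}\dots a_{i_k}$ then $\varphi(u)=\varphi(a_{i_1})\dots\varphi(a_{i_k})$, so a letter occurs in $\varphi(u)$ iff it occurs in $\varphi(a)$ for some letter $a$ of $u$; hence $r(\varphi(u))=\max_a r(\varphi(a))$, the maximum over the letters $a$ of $u$, where $r(\varphi(a))$ denotes the rank of the word $\varphi(a)$. Since $r(u)=\max_a r(a)$, it suffices to prove $r(\varphi(a))=r(a)$ for each letter $a$.

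Fix a letter $a$. If $b$ occurs in $\varphi(a)$, then $\varphi^n(b)\sqsubseteq\varphi^n(\varphi(a))=\varphi^{n+1}(a)$, so $|\varphi^n(b)|\le|\varphi^{n+1}(a)|$ for every $n$; by the comparison fact this rules out $r(b)>r(a)$, whence $r(b)\le r(a)$, and so $r(\varphi(a))=\max_b r(b)\le r(a)$. For the reverse inequality, write $\varphi(a)=b_1\dots b_m$, so that $|\varphi^{n+1}(a)|=\sum_{j=1}^m|\varphi^n(b_j)|$; putting $(d^*,\theta^*):=r(\varphi(a))=\max_j r(b_j)$, each summand is $O(n^{d^*}(\theta^*)^n)$, hence so is the finite sum, and comparing with the lower bound $n^{d(a)}\theta(a)^n$ for $|\varphi^{n+1}(a)|$ gives $(d(a),\theta(a))\le(d^*,\theta^*)$, i.e. $r(a)\le r(\varphi(a))$. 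Thus $r(\varphi(a))=r(a)$, and therefore $r(\varphi(u))=\max_a r(\varphi(a))=\max_a r(a)=r(u)$.

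I do not expect a genuine obstacle here: everything is elementary once the comparison fact is recorded. The one delicate point is precisely what that fact packages --- that neither the index shift $n\mapsto n+1$ nor the polynomial prefactors $n^d$ affect domination, and that when two letters share a common base $\theta$ the comparison must descend to the exponents $d$, which is exactly how the order on growth types was set up.
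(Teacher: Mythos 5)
Your proof is correct, and it follows the only natural route: reduce to single letters via the definition of $r$ as a maximum over letters, then compare $|\varphi^{n}(b)|$ with $|\varphi^{n+1}(a)|=\sum_j|\varphi^n(b_j)|$ using the Salomaa--Soittola growth estimates and the lexicographic-type order on pairs $(d,\theta)$. The paper itself offers no proof --- the proposition is stated with the word ``obviously'' --- so there is nothing to diverge from; your write-up simply supplies the routine verification the author omitted.
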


\begin{lemma}\label{rpart}
Пусть $r(u)=(d,\theta)$, при этом $\theta>1$. Тогда для некоторого $C(u)$ для всех $k$ выполнено
$$|u|+|\varphi (u)|+|\varphi^2(u)|+\dots +|\varphi^k(u)|<C(u)k^d\theta^k.$$
\end{lemma}
\begin{proof}
В самом деле, для некоторого $C$ для любого $k$ выполнено $|\varphi^k(u)|<Ck^d\theta ^k$. Тогда 
$\sum_{i=0}^k|\varphi^i(u)|<C\sum_{i=0}^ki^d\theta^i<C\sum_{i=0}^kk^d\theta^i<\dfrac{\theta C}{\theta-1}k^d\theta^k$.
\end{proof}

Положим $D,\Theta$ -- самый большой порядок роста среди порядков роста для всех букв из алфавита. 
Буквы с таким порядком роста будем называть {\it быстрорастущими.} Очевидно, $a_1$ является быстрорастущей буквой.
\begin{lemma}
В слове $\varphi^{\infty}(a_1)$ бесконечно много быстрорастущих букв.
\end{lemma}
\begin{proof}
Пусть $\varphi (a_1)=a_1u$. Тогда $\varphi^{\infty}(a_1)=a_1u\varphi (u)\varphi ^2(u)\varphi ^3(u)\dots$

Если в слове $u$ есть хотя бы одна быстрорастущая буква, то хотя бы одна быстрорастущая буква есть в $\varphi ^k(u)$ для любого $k$.

Предположим, что в $u$ быстрорастущих букв нет. Пусть $r(u)=(d,\theta)<(D,\Theta)$. Как мы знаем, $\theta>1$.
Тогда $|\varphi^k(a)|=1+\sum_{i=1}^k|\varphi ^{i-1}(u)|<C(u)k^d\theta^k$ согласно лемме \ref{rpart}. То есть $(D,\Theta)=(d,\theta)$. Противоречие.
\end{proof}

\begin{lemma}
Если $W$ равномерно рекуррентно, то все буквы из $A$ являются быстрорастущими.
\end{lemma}
\begin{proof}
Если в $A$ не все буквы являются быстрорастущими, то в $\varphi^{\infty}(a_1)$ найдётся подслово вида $e_0C_0f_0$, где $e_0$ и $f_0$ -- быстрорастущие буквы,
а $r(C_0)<(D,\Theta)$.
Для каждого $k\in \mathbb N$ представим $\varphi ^k(e_0)=V'_ke_kV_k$, где $r(e_k)=(D,\Theta)$ и $r(V_k)<(D,\Theta)$. 
Последовательность $\{e_k\}$ заключительно периодична с неким периодом $T_1$.

Аналогично $\varphi ^k(f_0)=U_kf_kU'_k$, $r(U_k)<(D,\Theta)$, $r(f_k)=(D,\Theta)$, посдедовательность $\{f_k\}$ заключительно периодична с периодом $T_2$.

Пусть $\sigma =\varphi ^{T_1T_2}$. Тогда существуют такие буквы $e,f$ и слово $C$, что $\sigma (e)=V'eV$, $\sigma (f)=UfU'$,
$r(e)=r(f)=(D,\Theta)$, $r(U)<(D,\Theta)$, $r(V)<(D,\Theta)$, $r(C)<(D,\Theta)$, $eCf$ -- подслово сверхслова $\varphi^{\infty}(a_1)$.

Введём обозначения: $E_k=\varphi ^{k-1}(V')\varphi ^{k-2}(V')\dots V'e$; $F_k=fU'\varphi (U)\varphi ^2(U)\dots\varphi ^{k-1}(U)$;
$C_k=V\varphi (V)\dots\varphi ^{k-1}(V)\varphi ^k(C)\varphi ^{k-1}(U)\varphi ^{k-2}(U)\dots U$.

Для любого $k$, $E_kC_kF_k$ является подсловом $\varphi ^{\infty}(a_1)$. Каждое следующее $E_k$ оканчивается на предыдущее,
каждое следующее $F_k$ начинается на предыдущее. Из леммы \ref{rpart} следует, что 
$\lim_{k\to \infty}\dfrac{|E_k|}{|C_k|}=\lim_{k\to \infty}\dfrac{|F_k|}{|C_k|}=\infty$. Так как морфизм $\psi$ является нестирающим, то 
$\lim_{k\to \infty}\dfrac{|\psi(E_k)|}{|\psi(C_k)|}=\lim_{k\to \infty}\dfrac{|\psi(F_k)|}{|\psi(C_k)|}=\infty$.

Предположим, что $W=\psi(\varphi^{\infty}(a_1))$ является равномерно рекуррентным. Тогда, как было показано в конце предыдущего раздела, множество 
его конечных подслов совпадает с множеством подслов -- непериодичного сверхслова $H$, порождённого примитивной подстановкой.
Следовательно, для $W$ выполнены следующие свойства:
\begin{proposition}\label{qqq}
Существуют такие положительные действительные $P$ и $K$, что
\begin{enumerate}
\item Если $u_1$ и $u_2$ -- подслова $W$ и $|u_2|\geq P|u_1|$, то $u_1\sqsubseteq u_2$.
\item Если $u\sqsubseteq W$, то то для любых двух различных вхождений $u$ в $W$ их левые концы находятся на расстоянии не меньшем, чем $K|u|$ 
(подразумевается, что $K<1$.)
\end{enumerate}
\end{proposition}
Возьмём такое натуральное $m$, что $\dfrac{3P}m<\dfrac K2$. Найдётся такое $k$, что $|\psi(E_k)|>m|\psi(C_{k+m})|$ и $|\psi(F_k)|>m|\psi(C_{k+m})|$.

В $W$ есть подслова $\psi(E_k)\psi(C_{k+i})\psi(F_k)$ для всех $i=0,1,\dots,m$. 

Пусть $N=\min\{|\psi(E_k)|,|\psi(F_k)|\}$. Таким образом, существует такой набор слов $A$, $B$, $C_i$ для $i=0\dots m$, что
\begin{enumerate}
\item $|A|=|B|=N$;
\item $|C_0|<|C_1|<\dots<|C_m|<\dfrac{N}{m}$;
\item $AC_iB\sqsubseteq W$ для любого $i$.
\end{enumerate}

Все слова $AC_iB$, согласно \ref{qqq}, можно поместить в $U$  -- подслово сверхслова $W$ длины $3PN$.
По принципу Дирихле, для каких-то $i$ и $j$ левые концы слов $AC_iB$ и $AC_jB$ будут находиться в $U$ на расстоянии, не большем $\dfrac{3PN}{m}$.
Тогда правые концы этих слов будут находиться на расстоянии, не большем, чем $\dfrac Nm+\dfrac{3PN}m<KN$. Слово $B$ встретилось со сдвигом, 
меньшим, чем $KN$. Противоречие.
\end{proof}
\begin{remark}
В работе \cite{Dur} доказана примерно такая же лемма.
\end{remark}

Если у всех букв алфавита одинаковый порядок роста, то $D=0$. 
Существуют константы $C_1$ и $C_2$ такие, что для любого $k\in \mathbb N$ и
любой буквы $a_i$ выполнено $C_1\Theta^k<|\psi(\varphi^k(a_i))|<C_2\Theta^k$.

\begin{proposition}
Алгоритмически можно определить, одинаковые ли порядки роста у букв алфавита. Если одинаковые, то можно явно указать $C_1$ и $C_2$, а также оценить
сверху и снизу число $\Theta$.
\end{proposition}
\begin{proof}
Сначала проверим, одинаковые ли порядки роста у букв. Напомним, что буква называется рекуррентной, если она принадлежит некоторой итерации $\varphi$
от самой себя.

\begin{lemma}\label{sq}
Порядки роста у всех букв одинаковые если и только если у всех рекуррентных букв порядки роста одинаковые.
\end{lemma}
\begin{proof}
В одну сторону это очевидно, докажем в другую, показав, что для любой буквы $a$ есть рекуррентная буква с таким же порядком роста.

Рассмотрим последовательность слов $\{u_i\}$, где $u_1=a$, а слово $u_{i+1}$ получается из $\varphi(u_{i})$ вычёркиванием всех рекуррентных букв.
Если какое-то слово $u_k=\varepsilon$, то, пользуясь \ref{qs}, несложно показать, что порядок роста каждого из $u_i$ является порядком роста некоторой
рекуррентной буквы. С другой стороны, для каждой буквы в $u_{i+1}$ в $u_i$ есть та буква, из которой она непосредственно возникла, поэтому, если $|u_{|A|+1}|>0$, то среди букв слов $u_i$ по принципу Дирихле есть рекуррентная буква.    
\end{proof}

Очевидно, что при замене $\varphi$ на $\varphi^k$ одинаковые порядки роста останутся одинаковыми,
а различные -- различными. Рассмотрим $\rho $ -- такую степень $\varphi$, что если $a_i$ -- рекуррентная буква, то $a_i\sqsubseteq \rho(a_i)$.
Для каждой рекуррентной буквы $a_i$ рассмотрим ограничение $\varphi$ на то множество букв, 
которое можно получить из $a_i$ итерациями подстановки $\rho$.
Для $a_i$ рассмотрим на этом множестве букв матрицу подстановки $M_i$, пусть букве $a_i$ в матрице отвечают первая строка и столбец.

Порядок роста $r(a_i)$ -- это порядок роста вектора $M_i^k(a_i)$ в $l_1$-норме. Рассмотрим жорданов базис для оператора $M_i$ 
(возможно, придётся комплексифицировать пространство). Рассмотрим те жордановы клетки, у которых модуль числа на диагонали наибольший. 
Пусть этот модуль -- это $\lambda$, из природы оператора $M_i$ следует, что $\lambda>1$. 
Среди этих клеток возьмём клетку самого большого размера $d$. Тогда скорость роста любого вектора в эрмитовой норме не превосходит 
$Cn^{d-1}|\lambda|^n$, причём вектора, у которых порядок роста меньше, лежат в подпространстве коразмерности $1$. 
Так как все нормы в конечномерном пространстве эквивалентны, то в исходном (некомпактифицированном) пространстве есть вектор с положительными 
координатами и таким же порядком роста нормы. Заметим, что у вектора, соответствующего букве $a_i$, порядок роста не меньше (ибо он не меньше, чем 
порядки роста всех остальных букв, соответствующих строкам матрицы).
Итак, мы выяснили, что если $r(a_i)=(d,\theta)$, то $\theta$ -- это максимальный из модулей собственных значений оператора $M_i$, а $d$ -- размер 
жордановой клетки.

Так как комплексные числа можно представить в виде матриц $2\times 2$ из действительных чисел, то в сигнатуре $\{=,<,0,1,+,\times\}$
можно выразить свойство ``жордановы клетки оператора $M_i$ имеют такой-то вид; клетки оператора $M_j$ имеют такой-то вид;
наборы модулей собственных значений, соответствующих клеткам, упорядочены так-то''.

Из теоремы Тарского-Зайденберга следует, что равенство $r(a_i)=r(a_j)$ алгоритмически проверяемо.

Пусть уже установлено, что все буквы имеют одинаковый порядок роста. В таком случае, этот порядок -- $(0,\theta)$ для некоторого $\theta$.

Из доказательства леммы \ref{sq} следует, что достаточно уметь указывать ограничивающие константы для каждой рекуррентной буквы. 
Кроме того, можно указать константы для подстановки $\rho$. 

Итак, мы снова работаем с матрицей $M_i$. Ограничивающие константы $C_1$ и $C_2$ можно вывести, если известны:
\begin{enumerate}
\item вид жордановой формы (то есть упорядочивание жордановых клеток по модулям коэффициентов и их размеры);
\item информация про коэффициенты разложения вектора $(1,0,\dots,0)$ по некоторому жорданову базису (коэффициент при одном из базисных векторов, отвечающий наибольшему собственному значению, ненулевой. Нам достаточно оценки снизу на его отношение к остальным коэффициентам).
\item оценки сверху на число обусловленности матрицы перехода к данному жорданову базису. 
\end{enumerate}

Первый пункт делается применением теоремы Тарского-Зайденберга. Покажем, как можно получить оценки во втором и третьем пунктах.
Из алгоритма поиска жорданова базиса следует, что существует матрица перехода к жорданову базису, коэффициенты которой являются
рациональными функциями от корней характеристического многочлена матрицы $M_i$. Значит, для второго и третьего пунктов существуют оценки, являющиеся 
отношениями двух многочленов от корней характеристического многочлена, при этом эти оценки не равны нулю. В таком случае можно как угодно оценить сверху
значение знаменателя, а значение числителя оценить снизу с помощью обобщённой теоремы Лиувилля.
\end{proof}

Далее можно считать, что порядки роста всех букв равны. Тогда, согласно \ref{Pr}, задачу можно свести к следующей:

{\bf Дано:} алфавиты $A$, $B$, $C$; морфизмы $\varphi:C^*\to C^*$, $\psi :C^*\to B^*$, $g:A^*\to A^*$, $h:A^*\to B^*$, а также числа $C'_1$, $C'_2$, $\Theta_1>1$, $\Theta_2$.
При этом все морфизмы нестирающие, морфизм $\varphi$ примитивен и продолжается над $c_1$ и известно, что $C'_1\lambda^k<|h(g^k(c_i))|<C'_2\lambda^k$ для всех $k$ и некоторого 
$\lambda\in [\Theta_1;\Theta_2]$. 

{\bf Определить:} верно ли что для всех натуральных $k$ и букв $a_i\in A$ каждое из слов $h(g^k(a_i))$ является подсловом сверхслова
 $W=\psi(\varphi^{\infty}(c_1))$?
 
 \begin{theorem}\label{nos}
 Эта задача алгоритмически разрешима.
 \end{theorem}
Для доказательства потребуется язык схем Рози.
\section{Схемы Рози.}

Напомним основные определения и факты, связанные со схемами Рози и описанные в работах \cite{MKR,MP}.

{\it Графом со словами} будем называть сильносвязный ориентированный граф, у которого на каждом ребре написано по два слова -- {\it переднее} и {\it заднее}. Также потребуем, чтобы каждая вершина либо имеет входящую степень $1$, а исходящую больше $1$, либо входящую степень больше $1$ и исходящую степень $1$. Вершины первого типа назовём {\it раздающими}, а второго -- {\it собирающими}.

{\it Путь} в графе со словами -- это последовательность рёбер, каждое следующее из которых выходит из той вершины, в которую входит предыдущая. {\it Симметричный путь} -- это путь, первое ребро которого начинается в собирающей вершине, а последнее ребро кончается в раздающей.

Каждый путь можно записать словом над алфавитом -- множеством рёбер графа, и это слово называется {\it рёберной записью пути}. Для двух путей выполняются отношения {\it подпути} (пишем $s_1\sqsubseteq s_2$), {\it начала} или {\it конца}, если для их рёберных записей выполняются соответственно
отношения подслова, начала или конца.
Кроме того, пишем $s_1\sqsubseteq_k s_2$, если для соответствующих слов $u_1$ и $u_2$ -- рёберных записей путей $s_1$ и $s_2$ --  выполнено $u_1\sqsubseteq_k u_2$.
Если последнее ребро пути $s_1$ идёт в ту же вершину, из которой выходит первое ребро пути $s_2$, путь, рёберная запись которого является конкатенацией рёберных записей путей $s_1$ и $s_2$, будем обозначать $s_1s_2$.

Для каждого пути $s$ в графе со словами определим {\it переднее слово} $F(s)$. Пусть $v_1v_2\dots v_n$ -- рёберная запись пути $s$. В $v_1v_2\dots v_n$ возьмём подпоследовательность: включим в неё $v_1$, а также те и только те рёбра, которые выходят из раздающих вершин графа. Эти рёбра назовём {\it передними образующими для пути $s$}. Возьмём передние слова этих рёбер и запишем их последовательную конкатенацию, это и будет $F(s)$.
\begin{remark}
Запись $|s|$ обозначает длину пути, которая измеряется в рёбрах. $|F(s)|$ -- это длина слова.
\end{remark}

Аналогично определяется $B(s)$. В $v_1v_2\dots v_n$ возьмём рёбра, входящие в собирающие вершины и ребро $v_n$ в порядке следования -- это {\it задние образующие для пути $s$}. Тогда последовательной конкатенацией задних слов этих рёбер получается
{\it заднее слово $B(s)$ пути $s$}.

\begin{definition}
Если $S$ – сильносвязный граф, не являющийся циклом, и $s$
– путь в графе, то естественное продолжение пути $s$ вправо – это минимальный
путь, началом которого является $s$ и который оканчивается в раздающей вершине.
Естественное продолжение пути $s$ влево – это минимальный путь, концом которого
является $s$ и который начинается в собирающей вершине.
\end{definition}

\begin{definition} \label{Def1}
Граф со словами будет являться {\it схемой Рози} для рекуррентного сверхслова $W$, если он удовлетворяет следующим свойствам, которые в дальшейшем будут называться {\it свойствами схем Рози}:

\begin{enumerate}
    \item Граф сильносвязен и состоит более чем из одного ребра.
    \item Все рёбра, исходящие из одной раздающей вершины графа, имеют передние слова с попарно разными первыми буквами.
	Все рёбра, входящие в одну собирающую вершину графа, имеют задние слова	с попарно разными последними буквами.
    \item Для любого симметричного пути, его переднее и заднее слова совпадают. То есть можно говорить просто о слове симметричного пути.
    \item Если есть два симметричных пути $s_1$ и $s_2$ и выполнено $F(s_1)\sqsubseteq _k F(s_2)$, то $s_1\sqsubseteq _k s_2$.
	\item Все слова, написанные на рёбрах графа, являются подсловами $W$.
	\item Для любого $u$~-- подслова $W$ существует симметричный путь, слово которого содержит $u$.  
	\item Для любого ребра $s$ существует такое слово $u_s$, принадлежащее $W$, что любой симметричный путь, слово которого содержит $u_s$, проходит по ребру $s$.
\end{enumerate}  
\end{definition}
{\it Опорным ребром} в схеме называется любое ребро, входящее в раздающую вершину и выходящее из собирающей. В любой схеме Рози присутствует хотя бы одно опорное ребро, и у каждого опорного ребра переднее слово равно заднему.
{\it Масштабом} схемы Рози называется наименьшая из длин слов на опорных рёбрах. В \cite{MP} показано, как для непериодичного рекуррентного сверхслова получать с помощью {\it графов Рози} схему Рози сколь угодно большого масштаба.

Определим {\it эволюцию} $(W,S,v)$ схемы Рози $S$ по опорному ребру $v$: 
пусть $\{x_i\}$ -- множество рёбер. входящих в начало $v$, а $\{y_i\}$ -- множество рёбер, идущих из конца $v$ (эти два множества могут пересекаться).
Обозначим $F(y_i)=Y_i$, $B(x_i)=X_i$, $F(v)=V$.
Рассмотрим все слова вида $X_i V Y_j$. Если такое слово не входит
в $W$, то пару $(x_i,y_j)$ назовём {\it плохой}, в противном случае -- {\it хорошей}. Также {\it хорошей} или {\it плохой} будем называть
 соответствующую тройку рёбер $(x_i,v,y_j)$.

Построим граф $S'$. Он получается из $S$ заменой ребра $v$ на $K_{\#\{x_i\},\#\{y_j\}}$, где $K_{m,n}$ -- полный двудольный граф. Более подробно: ребро $v$ удаляется, его начало заменяется на множество $\{A_i\}$ из $\#\{x_i\}$ вершин так, что для любого $i$ ребро $x_i$ идёт в $A_i$;
конец ребра $v$ заменяется на множество $\{B_j\}$ из $\#\{y_j\}$ вершин так, что для любого $j$ ребро $y_j$ выходит из $B_j$; вводятся рёбра $\{v_{i,j}\}$, соединяющие вершины множества $\{A_i\}$
с вершинами множества $\{B_j\}$.

По сравнению с $S$, у графа $S'$ нет ребра $v$, но есть новые рёбра $\{v_{ij}\}$. Остальные рёбра граф $S$ взаимно однозначно соответствуют рёбрам графа $S'$. Соответственные рёбра в первом и втором графе зачастую будут обозначаться одними и теми же буквами.

На рёбрах $S'$ расставим слова следующим образом. На всех рёбрах $S'$, кроме рёбер из $\{y_i\}$ и $\{v_{i,j}\}$, передние слова пишутся те же, что и передние слова соответственных рёбер в $S$. Для каждого $i$ и $j$, переднее слово ребра $y_j$ в $S'$ -- это $VY_j$. Переднее слово ребра $v_{i,j}$ -- это $Y_j$.
Аналогично, на всех рёбрах, кроме рёбер из $\{x_i\}$ и $\{v_{i,j}\}$, задние слова переносятся с соответствующих рёбер $S$; для всех $i$ и $j$ в качестве заднего слова ребра $x_i$ возьмём $X_iV$, а в качестве заднего слова ребра $v_{i,j}$ -- $X_i$.

Теперь построим граф $S''$. Рёбра $v_{i,j}$, соответствующие плохим парам $(x_i,y_j)$, назовём {\it плохими}, 
а все остальные рёбра графа $S'$ -- {\it хорошими}. Граф $S''$ получается, грубо говоря, удалением плохих рёбер из $S'$.
Более точно, в графе $S''$ раздающие вершины -- подмножество раздающих вершин $S'$, а собирающие вершины -- подмножество собирающих вершин $S'$.
В графе $S'$ эти подмножества -- вершины, из которых выходит более одного хорошего ребра, и вершины, в которые входит более одного хорошего ребра соответственно.
Назовём в $S'$ вершины этих двух подмножеств $S'$ неисчезающими. Рёбра в графе $S''$ соответствуют таким путям в $S'$, которые идут лишь по хорошим рёбрам, начинаются в неисчезающих вершинах, заканчиваются в неисчезающих, а все промежуточные вершины которых не являются неисчезающими.

Несложно показать, что $S''$ -- сильносвязный граф, не являющийся циклом. У каждого ребра в $S''$ есть естественное продолжение вперёд и назад. Естественное продолжение вперёд соответствует пути в $S'$; переднее слово этого пути в $S'$ возьмём в качестве переднего слова для соответствующего ребра $S''$. Аналогично для задних слов: у ребра в $S''$ есть естественное продолжение влево, этому пути в $S''$ соответствует путь в $S'$. Заднее слово этого пути и будет задним словом ребра в $S''$.

\begin{definition} Построенный таким образом граф со словами $S''$ назовём {\it элементарной эволюцией} $(W,S,v)$. 
\end{definition}

\begin{theorem} \label{T1}
Элементарная эволюция $(W,S,v)$ является схемой Рози для сверхслова $W$ (то есть удовлетворяет свойствам $1$---$7$ определения \ref{Def1}).
\end{theorem}

\begin{definition}
Пусть $S$ -- схема Рози для сверхслова $W$. На её рёбрах можно написать различные натуральные числа (или пары чисел). Такую схему мы назовём {\it нумерованной}. Если с рёбер пронумерованной схемы стереть слова, получится {\it облегчённая нумерованная схема}.
\end{definition}

\begin{definition}
{\it Метод эволюции} -- это функция, которая каждой облегчённой пронумерованной схеме (с нумерацией, допускающей двойные индексы) даёт этой же схеме новую нумерацию, такую, что в ней используются числа от $1$ до $n$ для некоторого $n$ по одному разу каждое.
\end{definition}

Зафиксируем какой-либо метод эволюции и далее не будем его менять.

Среди опорных рёбер пронумерованной схемы $S$ возьмём ребро $v$ с наименьшим номером, и совершим элементарную эволюцию $(W,S,v)$.
Укажем естественную нумерацию новой схемы.

Напомним, что сначала строится схема $S'$, а потом -- $S''$. В схеме $S'$ все рёбра можно пронумеровать по следующему правилу: рёбра кроме $v$ сохраняют номера, а рёбра вида $v_{ij}$ нумеруют соответствующим двойным индексом.

Каждое ребро в схеме $S''$  -- это некоторый путь по рёбрам схемы $S'$, различным рёбрам из $S''$ соответствуют в $S'$ пути с попарно различными первыми рёбрами. Таким образом, рёбра схемы $S''$ можно пронумеровать номерами первых рёбер соответствующих путей $S'$. Теперь применим к облегчённой нумерованной схеме $S''$ метод эволюции. Получится новая облегчённая нумерованная схема, и в нумерованной (не облегчённой) схеме $S''$ перенумеруем рёбра соответственным образом.

\begin{definition}
Описанное выше соответствие, ставящее нумерованной схеме Рози другую нумерованную схему Рози, назовём {\it детерменированной эволюцией}. Будем обозначать это соответствие $S''=\Evol(S).$
\end{definition}

\begin{definition}
Применяя детерменированную эволюцию к нумерованной схеме $S$ много раз, получаем последовательность нумерованных схем Рози.
Кроме того, на каждом шаге получаем множество пар чисел, задающие плохие пары рёбер. {\it Протокол детермеминованной эволюции} -- это последовательность таких множеств пар чисел и облегчённых нумерованных схем.
\end{definition}

Из определения элементарной эволюции следует
\begin{proposition}
Облегчённая нумерованная схема для $\Evol(S)$ однозначно определяется по облегчённой нумерованной схеме $S$ и множеству пар чисел,
задающие плохие пары рёбер.
\end{proposition}
\begin{proposition}\label{f}
Каждому симметричному пути в $Evol(S)$ соответствует симметричный путь в $S$ с таким же словом; длина соответствующего пути в $S$ не меньше, чем в $Evol(S)$ (строится соответствие так: $Evol(S)=S''$ естественным образом вкладывается в $S'$, а после применяется отображение путей $f^{-1}$, описанное в \cite{MKR}). При этом соответствии сохраняется отношение 
$\sqsubseteq_k$, а также отношения ``начала пути'' и ``конца пути''. Опорное ребро, по которому делалась эволюция, не соответствует в $Evol(S)$ никакому пути. Для любого симметричного пути в $S$, не
явлюящегося этим опорным ребром, в $S$ существует минимальный симметричный путь, cоответствующий некоторому симметричному пути в $Evol(S)$. Все соответствующие пути можно определить по облегчённым нумерованным схемам.
\end{proposition}

В работе \cite{MKR} доказана следующая теорема:
\begin{theorem}\label{MKR}
Если $W$ -- примитивное подстановочное непериодичное сверхслово, то протокол его детерменированной эволюции периодичен с предпериодом.
\end{theorem}
При этом предпериод и период алгоритмически находятся по морфизмам, порождающим слово $W$.

\begin{definition}
Если $S$ -- схема Рози для сверхслова $W$, а $T$ -- натуральное число, то в соответствующей облегчённой нумерованной схеме можно указать, какие из 
симметричных путей длины не более $T$ являются допустимыми. 
Этот набор (облегчённая нумерованная схема + набор допустимых путей не длиннее $T$) будем называть {\it $T-$разруленной схемой}.
\end{definition}

Из доказательства теоремы \ref{MKR} (см. \cite{MKR}) вытекает следующее предложение:
\begin{proposition}\label{MKRT}
Если $W$ -- примитивное подстановочное сверхслово, а $T$ -- произвольное число, то для протокола детерменированной эволюции последовательность
соответствующих $T-$разруленных схем периодична с предпериодом. Опять же, предпериод и период алгоритмически находятся.
\end{proposition}

\section{Построение алгоритма для теоремы \ref{nos}.}

\begin{proposition}\label{big}
 Пусть $W=\psi(\varphi ^{\infty}(c_1))$ -- непериодичное примитивное сверхслово.
\begin{enumerate}
\item Можно явно найти такое число $P$, что если $u_1$ и $u_2$ -- подслова $W$ и $|u_2|\geq P|u_1|$, то $u_1\sqsubseteq u_2$.
\item Можно явно найти такое число $C$, что если $u\sqsubseteq W$, то то для любых двух различных вхождений $u$ в $W$
их левые концы находятся на расстоянии не меньшем, чем $C|u|.$ 
\item Можно явно найти такое число $C_{max}$, что в любой схеме Рози $S$ длина слова любого допустимого пути не менее $C_{min}Mn$, где $n$ -- количество рёбер в пути, а $M$ -- масштаб схемы $S$.
\item Можно явно найти такое число $C_{min}$, что в любой схеме Рози $S$ длина любого переднего и заднего слова на рёбрах схемы не превосходит $C_{max}M$, где $M$ -- масштаб схемы $S$. Следовательно, если путь содержит $n$ рёбер, то длина переднего и заднего слов этого пути не превосходит
$C_{max}Mn$.
\item Масштаб схемы $Evol(S)$ не меньше масштаба схемы $S$. Можно явно найти такое $C_m$, что масштабы схем $Evol(S)$ и $S$ относятся не более, чем в
$C_m$ раз для любой схемы $S$.
\end{enumerate}
\end{proposition}
Доказательства всех утверждений предложения \ref{big} можно найти в \cite{MKR}.

Далее слово $W$, а также морфизмы $\varphi$ и $\psi$ считаем неизменными.

Пусть $s$ -- симметричный путь по рёбрам схемы $S$. Пусть последовательность его рёбер -- это $v_1,v_2,\dots,v_n$.
Предположим, что $v_{i_1},v_{i_2},\dots,v_{i_m}$ -- его передние образующие.
Если длина слова $F(s)$ превосходит $C_{max}M$, то передних образующих рёбер более одного. Путь $v_1v_2\dots v_{i_m-1}$ также является симметричным. 
Будем обозначать его $s_{right}$ и по отношению к $s$ называть {\it урезанным справа.} Так как $F(s)=F(s_{right})F(v_{i_m})$, то $F(s_{right})\geq F(s)-C_{max}M$.

Аналогично определяется путь $s_{left}$, который является по отношению к $s$ {\it урезанным слева.}

Если $F(s)\geq 2C_{max}M$, то существует путь $(s_{left})_{right}=(s_{right})_{left}$, который будем обозначать $\cut(s)$. Очевидно, $F(s)=B(v_1)\cut(s)F(v_2)$
для некоторых рёбер $v_1$ и $v_2$.

Пусть $S$ -- схема Рози для сверхслова $W$, $A$ -- слово. Если $A\sqsubseteq W$, то $A\sqsubseteq F(s)$ для некоторого допустимого пути $s$.
\begin{definition}
Любой минимальный по включению путь $s$ будем обозначать $l(S,A)$. Здесь $S$ -- схема, $A$ -- слово. Далее будет показано, что во многих случаях
такой путь единственный.
\end{definition}

\begin{lemma} \label{lmp}
Длина слова пути $l(S,A)$ не превышает $|A|+2C_{max}M$.  
\end{lemma}
\begin{proof}
Пусть $s=l(S,A)$. Слово $F(s)$ можно представить в виде конкатенации $B(v_1)F(\cut(s))F(v_2)$. В этом слове содержится $A$, но при этом $A$
из-за минимальности $s$ не содержится в $B(v_1)F(\cut(s))=F(s_{right})$ и в $F(\cut(s))F(v_2)=F(s_{left})$. Следовательно, $A=u_1F(\cut(s))u_2$
для некоторых непустых $u_1$ и $u_2$. А как мы знаем, $F(\cut(s))\geq F(s)-2C_{max}M$.  
\end{proof}

\begin{lemma} \label{emp}
Существует такое число $K$, что если $M$ -- масштаб схемы $S$, а длина $|A|$ не меньше $KM$, то $l(S,A)$ единственен. 
\end{lemma}
\begin{proof}
Зафиксируем один минимальный путь $s$. Также рассмотрим произвольный минимальный путь $s'$.
Из доказательства леммы \ref{lmp} следует, что $A=u_1F(\cut(s))u_2$ для некоторых непустых $u_1$ и $u_2$.
Так как $A\sqsubseteq F(s')$, то $F(\cut(s))\sqsubseteq F(s')$. По свойству $4$ схем Рози, $\cut(s)\sqsubseteq s'$. Длины слов $F(\cut(s))$ и $F(s')$
отличаются не более, чем на $4MC_{max}$, следовательно, если $\frac{4MC_{max}}{|A|-2MC{max}}<C$, то у слова $F(\cut(s))$ ровно одно вхождение в $F(s')$.
Выполнения этого неравенства легко достичь, выбирая достаточно большое $K$.

Пусть $v_1v_2\dots v_n$ -- ребра пути $s'$, при этом рёбра $v_k,v_{k+1},\dots v_l$ образуют путь $\cut(s)$.
Тогда $B(v_{k-1})F(\cut(s))F(v_{l+1})\sqsubseteq F(s')$. Так как у $F(\cut(s))$ ровно одно вхождение в $F(s')$, то $F(v_{l+1})$ и $u_2$ начинаются на одну и ту же букву, а $B(v_{k-1})$ и $u_1$ кончаются на одну и ту же букву. Следовательно, у путей $s$ и $s'$ первое ребро после вхождения $\cut(s)$ одно и то же (а именно то, переднее слово которого начинается с первой буквы слова $u_1$.) Аналогично, ребро непоследственно перед вхождением $\cut(s)$
одно и то же. Следовательно, $s'$ содержит путь $s$. Из минимальности следует, что $s=s'$.
\end{proof}

\begin{proposition}\label{opa}
Если $S$ -- схема с масштабом $M$, $s$ -- допустимый путь, $|A|\geq KM$, $A\sqsubseteq s$, $A=u_1F(\cut(s))u_2$ для непустых $u_1$, $u_2$. Тогда
$l(S,A)=s$. 
\end{proposition}

\begin{proof}
Пусть $A\sqsubseteq F(s_{right})$. Тогда у $F(\cut(s))$ есть вхождение в $A\sqsubseteq F(right)$, не являющееся его концом. Но у слова $F(\cut(s))$
есть ровно одно вхождение в $F(s)$ (см. доказательство леммы \ref{emp}). Противоречие.

Аналогично доказывается, что $F(s_{left})$ не содержит $A$. Следовательно, $s=l(S,A)$.
\end{proof}

\begin{lemma} \label{ututu}
Пусть $S$ -- схема масштаба $M$, а длины слов $A$ и $B$ не менее $KM$. Тогда путь $l(S,A)$ -- начало пути $l(S,AB)$.
\end{lemma}
\begin{proof}
Пусть $v_1v_2\dots v_n$ -- последовательность рёбер $l(S,AB)$. Для некоторого $k$, $v_{k+1}v_{k+2}\dots v_n$ -- последовательность рёбер
 пути $l(S,AB)_{left}$. Тогда $AB=u_1u_2$, $B(v_k)$ кончается на $u_1$ и $F(v_{k+1}v_{k+2}\dots v_n)$ начинается на $u_2$.
Так как $|u_1|\leq C_{max}M < |A|$, то $A=u_1u_3$. Слово $F(l(S,AB)_{left})$ начинается на $u_3$, следовательно, существует такое минимальное $k'$, что
$v_{k+1}v_{k+2}\dots v_{k'}$ -- симметричный путь, слово которого начинается с $u_3$.
Докажем, что $l(S,A)=v_1v_2\dots v_{k'}$. В самом деле,
$$A=u_1u_3\sqsubseteq B(v_k)F(v_{k+1}v_{k+2}\dots v_{k'})=F(v_1v_2\dots v_{k'}).$$
Из минимальности $k'$ следует, что $u_3=F((v_{k+1}v_{k+2}\dots v_{k'})_{right})u_4$ для некоторого непустого $u_4$. Следовательно,
$A=u_1F(\cut (v_1v_2\dots v_{k'}))u_4$. Утверждение леммы следует из предложения \ref{opa}. 
\end{proof}

Из доказательства этой леммы следует следующее предложение:
\begin{proposition}\label{zc}
Пусть $S$ -- схема масштаба $M$, а длины слов $A$ и $B$ не менее $KM$. Если $F(l(S,A))=u_1Au_2$, а $F(l(S,AB))=u_3ABu_4$. Тогда $u_1=u_3$.
\end{proposition}

\begin{lemma}\label{nya}
Пусть $S$ -- схема Рози сасштаба $M$, слова $A$,$B$,$C$ имеют длину более $KM$, слова $AB$ и $BC$ являются подсловами $W$.
Тогда следующие условия эквивалентны:
\begin{enumerate}
\item Слово $ABC$ является подсловом $W$.
\item Склейка путей $l(S,AB)$ и $l(S,BC)$ по пути $l(S,B)$ является допустимым путём (согласно лемме \ref{ututu}, путь $l(S,B)$ является концом $l(S,AB)$ и началом $l(S,BC)$).
Иначе говоря, путь $l$, имеющий длину $|l(S,AB)|+|l(S,BC)|-|l(S,B)|$ и такой, что $l(S,AB)$ является началом пути $l$, а $l(S,BC)$ -- концом $l$, является
допустимым.
\end{enumerate}
При этом путём $l(S,ABC)$ будет являться построенный путь $l$.
\end{lemma}
\begin{proof}
Предположим, что $ABC\sqsubseteq W$. Рассмотрим путь $s=l(S,ABC)$. Согласно лемме \ref{ututu}, $l(S,AB)$ является началом $s$, а $l(S,BC)$ -- 
концом $s$. Пусть $F(ABC)$=$u_1ABCu_2$. Тогда, согласно \ref{zc}, $F(l(S,AB))=u_1ABu_3$, а $F(l(S,BC))=u_4BCu_2$. Также путь $l(S,B)$ является концом
пути $l(S,AB)$ и началом пути $l(S,BC)$, а слово $F(l(S,B))$ представляется в виде $u_4Bu_3$.

Пусть $v_1v_2\dots v_n$ -- последовательность рёбер $s$. Тогда найдутся такие $k_1$ и $k_2$, что $l(S,AB)=v_1v_2\dots v_{k_1}$ и 
$l(S,BC)=v_{k_2}v_{k_2+1}\dots v_n$.
Предположим, что $k_1<k_2$. Тогда рассмотрим минимальный симметричный путь $s'$, последнее ребро которого -- $v_{k_1}$. Так как $B(s')=B(v_{k_1})$, то
$F(s')\leq C_{max}M$. С другой стороны, 
$$F(s')+F(s)=F(v_1v_2\dots v_{k_1})+(F(s')+F(v_{k_1}v_{k_1+1}\dots v_n))>F(l(S,AB))+F(l(S,BC)).$$
Но правая часть превосходит левую хотя бы на $|B|-C_{max}M$. Противоречие.
Таким образом, $k_1\leq k_2$ и мы можем рассмотреть симметричный путь $v_{k_2}v_{k_2+1}\dots v_{k_1}$. Длина слова этого пути
$$|F(v_{k_2}v_{k_2+1}\dots v_{k_1})|=|F(l(S,AB))|+|F(l(S,BC))|-|F(s)|=|u_3|+|B|+|u_4|.$$
У слова $F(l(S,B))$ такая же длина и оно также является окончанием слова $F(l(S,AB))$. Следовательно, $F(v_{k_2}v_{k_2+1}\dots v_{k_1})=F(l(S,B))$ 
и $v_{k_2}v_{k_2+1}\dots v_{k_1}=l(S,B)$ по $4$-му свойству схем Рози. Значит, допустимый путь $s$ -- это путь $l$, фигурирующий во втором условии.
Доказана импликация $1\to 2$.

Предположим теперь, что выполнено второе условие. Докажем, что $F(l)$ содержит $ABC$. Воспользуемся предложением \ref{zc} и представим слова в 
следующем виде: $F(l(S,AB))=u_1ABu_2$, $F(l(S,BC))=u_3BCu_4$, $F(l(S,B))=u_3Bu_2$.
Пусть $v_1v_2\dots v_n$ -- последовательность рёбер пути $l$, при этом рассматриваемый подпуть $l(S,B)$ -- это $v_{k_1}v_{k_1+1}\dots v_{k_2}$.
Обозначим $D=F(v_{k_2+1}v_{k_2+2}\dots v_n)$. Тогда $u_3BCu_4=u_3Bu_2D$, следовательно, $Cu_4=u_2D$. Но $F(l)=u_1ABu_2D=u_1ABCu_4$. Доказана
импликация $2\to 1$.
\end{proof}

\begin{definition}
Все подслова сверхслова $g^{\infty}(a)$, имеющие длину $1$ или $2$, назовём {\it $g-$источниками}.
\end{definition}

Все $g-$источники, согласно \ref{monad}, находятся алгоритмически.
Пусть их количество равно $m$.
\begin{definition} Слова $h(g^k(q_1)),h(g^k(q_2)),\dots,h(g^k(q_m))$, где $q_i$-- это $g-$источники, назовём {\it рабочими словами порядка $k$.}
\end{definition}

\begin{proposition} \label{lww}
Можно найти такие положительные числа $C_1$ и $C_2$, что для любых $k\in \mathbb N$ и $g-$источника $q_i$ выполнено $C_1\lambda^k<|h(g^k(q_i))|<C_2\lambda^k$.
\end{proposition}

Далее фиксируем число $T$. Оно предполагается достаточно большим; явно укажем его позднее.

\begin{definition}
Пусть $S$ -- нумерованная схема Рози, $k$ -- натуральное число, все рабочие слова порядка $k$ являются подсловами $W$.
Тогда множество, состоящее из
\begin{enumerate}
\item соответствующей облегчённой нумерованной схемы для $S$;
\item множества всех допустимых путей не длиннее $T$, рассматриваемых как пути в облегчённой схеме (эти пути назовём {\it проверочными});
\item номера $T-$разруленной схемы $S$ в периоде или предпериоде (согласно \ref{MKRT}, эти схемы периодичны с предпериодом);
\item множества путей $l(S,h(g^k(q_i)))$ для всех источников $q_i$ (пути рассматриваются в облегчённой схеме, назовём их {\it основными путями}).
\end{enumerate}
назовём {\it антиоснасткой $(S,k)$}.
\end{definition}

\begin{definition}
{\it Размер антиоснастки $(S,k)$} -- это максимальная длина (в рёбрах) по путям из $l(S,h(g^k(q_i)))$ для всех $g-$источников $q_i$.
\end{definition}

\begin{lemma} \label{size}
Можно найти такие положительные $C_3$, $C_4$ и $C_5$, что для любой схемы $S$ и любого $k$ размер антиоснастки $(S,k)$ заключён между
$C_3\frac{\lambda^k}{M}$ и $C_4\frac{\lambda^k}{M}+C_5$, где $M$ -- масштаб схемы.
\end{lemma}
\begin{proof}
Пусть $p_i$ -- рабочее слово порядка $k$. Согласно \ref{lww}, $|p_i|>C_1\lambda^k$.
Если $s=l(S,p_i)$, то $|F(s)|\geq|p_i|$. С другой стороны, $F(s)\leq C_{max}M|s|$. Стало быть, $|s|\geq \frac{C_1\lambda^k}{C_{max}M}$.

С другой стороны, $C_{min}M|s|\leq F(s)\leq |p_i|+2MC_{max}<C_2\lambda^k+2MC_{max}$. 
Следовательно, $|s|<\frac{C_2\lambda^k}{C_{min}M}+2\frac{C_{max}}{C_{min}}$.
\end{proof}

\begin{corollary} \label{wts}
Можно указать явно такие $C_{6}$, $C_{7}$, $C_8$ и $C_9$, что если размер антиоснастки $(S,k)$ больше $C_6$, а $M$ -- масштаб схемы $S$, то
\begin{enumerate}
\item Длины всех рабочих слов составляют не менее $KM$.
\item Размеры антиоснасток $(S,k)$ и $(\Evol(S),k)$ относятся не более, чем в $C_{7}$ раз.
\item Если схема $S_1$ получена из $S$ применением некоторого количества операций $\Evol$, то размер антиоснастки $(S_1,k)$ не превосходит размера 
антиоснастки $(S,k)$, умноженного на $C_7$.
\item Размеры антиоснасток $(S,k)$ и $(S,k+1)$ отличаются не более, чем в $C_{8}$ раз.
\item Размер оснастки $(S,k+C_9)$ больше размера оснастки $(S,k)$ хотя бы в два раза.
\end{enumerate}
\end{corollary}
\begin{proof}
Пусть $x$ -- размер антиоснастки $(S,k)$. В различных пунктах будут различные требования вида ограничения на $x$ снизу. Выберем $C_6$ исходя из самого сильного требования.
\begin{enumerate}
\item Из \ref{size} и \ref{lww} следует, что если $x$ -- размер антиоснастки, а $q$ -- рабочее слово порядка $k$, то
$\lambda^k>\frac{(x-C_5)M}{C_4}$ и $|q|>C_1\lambda^k$. Стало быть, $|q|>\frac{C_1(x-C_5)M}{C_4}$. Достаточно выполнения неравенства $KC_4<C_1(x-C_5)$.
\item Пусть $M_1$ -- масштаб схемы $Evol(S)$, $x$ и $x_1$ -- размеры антиоснасток.
Тогда выполнены двойные неравенства $C_4\frac{\lambda^k}{M_1}+C_5>x_1>C_3\frac{\lambda^k}{M_1}$ и $C_4\frac{\lambda^k}{M}+C_5>x>C_3\frac{\lambda^k}{M}$.

Мы знаем, что $M\leq M_1\leq C_mM$ (см. \ref{big}).
Получаем $x_1>\frac{C_3M(x-C_5)}{M_1C_4}$. Если $x>2C_5$, то $x_1>\frac{C_3}{2C_4C_m}x$.

С другой стороны, $x_1<\frac{C_4xM}{M_1C_3}+C_5\leq 2\frac{C_4}{C_3}x$ при $x>\frac{C_5C_3}{C_4}$.

\item Согласно \ref{big}, масштаб схемы $S_1$ не меньше, чем $M$. Далее см. доказательство предыдущего пункта. 

\item Пусть $x$ и $x_1$ -- размеры антиоснасток $(S,k)$ и $(S,k+1)$.

Тогда $C_3\frac{\lambda^{k+1}}{M}<x_1<C_4\frac{\lambda^{k+1}}{M}+C_5$. С другой стороны, 
$\frac{x-C_5}{C_4}<\frac{\lambda^{k}}{M}<\frac{x}{C_3}$.

Таким образом, $\frac{C_3\lambda(x-C_5)}{C_4}<x_1<\frac{C_4\lambda x}{C_3}+C_5$. При $x>2C_5$ и $x>\frac{C_5C_3}{C_4}$ получаем, что 
$$\frac{C_3\lambda}{2C_4}<\frac{x_1}{x}<\frac{2C_4\lambda}{C_3}.$$
Заметим, что нам не обязательно знать $\lambda$ точно. Достаточно знать какую-нибудь оценку сверху.

\item Пусть $x$ и $x_1$ -- размеры антиоснасток $(S,k)$ и $(S,k+C_9)$.

Тогда $C_3\frac{\lambda^{k+C_9}}{M}<x_1$. С другой стороны, $\frac{\lambda^{k}}{M}>\frac{x-C_5}{C_4}$.

Таким образом, при $x$ > $2C_5$ выполняется $x_1>\frac{C_3\lambda^{C_9}}{2C_4}x$, что не меньше $2x$ при достаточно большом $C_9$.

Заметим, что нам не обязательно знать $\lambda$ точно. Достаточно знать какую-нибудь оценку снизу, отделяющую $\lambda$ от $1$.
\end{enumerate}
\end{proof}

\begin{lemma}\label{hare}
Если размер антиоснастки $(S,k)$ не меньше $C_6$, то по антиоснастке $(S,k)$ можно алгоритмически определить, верно ли, что все рабочие слова порядка
$k+1$ являются подсловами $W$, и, если верно, то алгоритмически найти антиоснастку $(S,k+1)$.
\end{lemma}
\begin{proof}
Пусть $x$ -- размер антиоснастки $(S,k)$.
Необходимо для каждого источника $p_i$ определить, является ли $h(g^{k+1}(p_i))$ подсловом $W$, и если является, то найти путь $l(S,h(g^{k+1}(p_i)))$.

Пусть $g(p_i)=a_{i_1}a_{i_2}\dots a_{i_m}$.  
$$
h(g^{k+1}(p_i))=h(g^k(a_{i_1}))h(g^k(a_{i_2}))\dots h(g^k(a_{i_m})).
$$ 

Каждая из букв $a_{i_1}$, $a_{i_2}$, \dots, $a_{i_m}$ является источником.
Также источниками являются $a_{i_1}a_{i_2}$, $a_{i_2}a_{i_3}$,\dots, $a_{i_{m-1}}a_{i_m}$.

Согласно лемме \ref{wts}, длины слов $h(g^k(a_{i_1}))$, $h(g^k(a_{i_2}))$, $h(g^k(a_{i_3}))$ не менее $KM$,
поэтому для схемы $S$, слов $A=h(g^k(a_{i_1}))$, $B=h(g^k(a_{i_2}))$ и $C=h(g^k(a_{i_3}))$ выполняются условия леммы \ref{nya}.

Из антиоснастки $(S,k)$ нам известны пути $l(S,AB)$, $l(S,BC)$ и $l(B)$. Чтобы определить, является ли $ABC$ подсловом сверхслова $W$,
нужно узнать, является ли некоторый путь допустимым. Длина этого пути равна $|l(S,AB)|+|l(S,BC)|-|l(S,B)|$ и не превосходит $2x$.
Если $T>2x$, то определить допустимость пути мы можем, проверив, лежит ли он среди проверочных путей. Если он допустимый, то он и является путём $l(ABC)$.

Далее воспользуемся леммой \ref{nya}, применённой к словам 
$h(g^k(a_{i_1}a_{i_2}))$, $h(g^k(a_{i_3}))$, $h(g^k(a_{i_4}))$ и аналогичным способом определим путь
$l(S,h(g^k(a_{i_1}a_{i_2}a_{i_3}a_{i_4})))$, если он существует. На этом шаге потребуется неравенство $T>3x$.

Потом определим, существует ли путь $l(S,h(g^k(a_{i_1}a_{i_2}a_{i_3}a_{i_4}a_{i_5})))$ и, действуя подобным образом, доберёмся до
$l(S,h(g^k(a_{i_1}a_{i_2}\dots a_{i_k})))$. На последнем шаге нам хватит оценки $T>x(m-1)$.
\end{proof}

\begin{lemma}\label{hare2}
Если размер антиоснастки $S,k$ хотя бы $C_8C_6$, то по $(S,k)$ можно найти антиоснастку $(\Evol(S),k)$.
\end{lemma}
\begin{proof}
По номеру $T-$разруленной схемы $S$ в периоде или предпериоде можно узнать следующую разруленную схему. 
Для определения антиоснастки $(Evol(S),k)$ осталось найти все основные пути.
 
Согласно \ref{wts}, размер $(S,k+1)$ хотя бы $C_6$. Следовательно, в антиоснастке $(S,k+1)$ для каждого источника ровно один основной путь.
Пусть $s$ -- основной путь антиоснастки $(Evol(S),k)$, соответствующий источнику $p$. 
Согласно \ref{f}, в облегчённой схеме $S$ пути $s$ соответствует путь $s'$, у которого (в необлегчённой схеме) будет такое же слово, как и у $s$. Из минимальности $s_1=l(S,h(g^k(p_i)))$ следует, что $s_1\sqsubseteq s'$.

Так как $|s_1|>1$, то, согласно \ref{f}, в $S$ можно найти минимальный путь, содержащий $s_1$, который соответствует пути с таким же словом в $\Evol(S)$. 
Этим путём, очевидно, и будет являться $s$.
\end{proof}

Теперь перейдём к построению алгоритма. Возьмём нумерованную схему Рози $S_0$ и подберём такое $k_0$, чтобы размер антиоснастки $(S_0,k_0)$ был больше, чем
$\Gamma =2C_6C_7^2C_8^{C_9}$ (если такого $k$ не существует, то алгоритм выдаёт ответ ``нет''.) 
Далее рассмотрим следующую последовательность антиоснасток $(S_i,k_i)$: если размер антиоснастки $(S_{i-1},k_{i-1})$ меньше, чем $\Gamma$, то
$k_{i}=k_{i+1}+1$, $S_i=S_{i-1}$. Если же размер антиоснастки $(S_i,k_i)$ больше либо равен $\Gamma$, то $k_{i}=k_{i-1}$, $S_i=\Evol(S_{i-1})$. 
Размер первой оснастки обозначим $x_0$.

Из утверждений леммы леммы \ref{wts}, следует, что размер антиоснасток этой последовательности не опускается ниже $C_6C_7$ и не поднимается выше $X=\max\{C_7x_0,C_7C_8\Gamma\}$. (Теперь можно указать $T$, возьмём его равным $2X\max_{a\in A} |g(a)|$. ) 
Следовательно, различных антиоснасток в этой последовательности не может быть больше, 
чем некоторое алгоритмически определяемое число $R$. Попытаемся построить первые $R+1$ член последовательности антиоснасток. 
Если для некоторых $k_i$ и $S_i$ соответствующая антиоснастка не существует, то некоторое рабочее слово не является подсловом $W$ и 
алгоритм выдаёт ответ ``нет''.

Если первые $R+1$ антиоснастки удалось построить, то среди них есть две одинаковых. Пусть алгоритм выдаст ответ ``да''. Докажем,
что этот ответ правильный. Согласно леммам \ref{hare} и \ref{hare2}, существование каждого
следующего члена последовательности антиоснасток, а также сам следующий член определяются по предыдущему члену последовательности 
(то есть без информации о словах на рёбрах схемы и о числе $k$.) Значит, последовательность антиоснасток не оборвётся ни на каком члене. 
Для доказательства корректности работы алгоритма осталось показать, что в последовательности $(S_i,k_i)$ числа $k_i$ неограниченно возрастают.

Предположим противное. Тогда из \ref{size} следует, что в бесконечной последовательности схем Рози, 
полученной итерированием детерменированной эволюции из некоторой схемы, масштабы схем в ограничены в совокупности. 
Следовательно, различных слов на рёбрах всех схем последовательности тоже конечное число.
А так как облегчённых схем в последовательности конечно, то и число самих схем Рози в последовательности конечно.

Для каждой схемы Рози рассмотрим множество слов её симметричных путей. Согласно \ref{f}, такое множество, построенное для $\Evol(S)$, 
является собственным подмножеством множества слов симметричных путей схемы $S$. 
Отсюда следует, что никакая схема не может быть получена из себя же итерированием операции $\Evol$. 
Получаем противоречие с конечностью числа схем в последовательности.

\end{document}